\newtheorem{theorem}{Theorem}[section]
\theoremstyle{definition}
\theoremstyle{definition}
\theoremstyle{definition}
\theoremstyle{definition}
\theoremstyle{remark}
\theoremstyle{definition}
\newtheorem{lemma}{Lemma}[section]
\theoremstyle{remark}
\setlist[itemize]{leftmargin=0.4cm,labelindent=\parindent}
\author{\small \begin{tabular}{ccc}
Wendi Han & Guangyue Han\\
The University of Hong Kong & The University of Hong Kong\\
email: wendyhan@connect.hku.hk & email: ghan@hku.hk\\
\end{tabular}}
\date{{\normalsize \today}}
\title{A New Proof of Hopf's Inequality Using a Complex Extension of the Hilbert Metric}
\begin{document} \maketitle

\begin{abstract}
Hopf's inequality for positive linear operators yields a strengthening of Perron's theorem. We give in this paper an alternative proof of this strengthening using a complex extension of the Hilbert metric.
\end{abstract}

{\it Index terms:} Perron's theorem, Hopf's inequality, positive matrix, Hilbert metric, Birkhoff contraction coefficient.

\section{Introduction}

Let $n$ be an integer greater than or equal to $2$. Let $A = (a_{ij})$ be an $n \times n$ positive matrix, i.e., $a_{i,j} > 0$ for all $i, j$. By Perron's theorem~\cite{Perron07}, the largest eigenvalue (in modulus) of $A$, denoted by $\rho(A)$, is unique, real and positive, and therefore, the {\em spectral ratio} $\kappa(A)$ of $A$, defined as
$$
\kappa(A) \triangleq \max\{|\lambda|: \lambda \mbox{ is an eigenvalue of } A, \lambda \neq \rho(A)\}/\rho(A),
$$
is strictly less than $1$. Ostrowski~\cite{Ostrowski63} strengthened this result and showed that
\begin{equation}  \label{Ostrowski-Strengthening}
\kappa(A) \leq \frac{M^2-m^2}{M^2+m^2},
\end{equation}
where $m = \min_{i, j} a_{ij}$ and $M = \max_{i, j} a_{ij}$. Inspired by Ostrowski's theorem, Hopf~\cite{Hopf1} further strengthened Perron's theorem and showed that
\begin{equation}  \label{Hopf-Strengthening}
\kappa(A) \leq \frac{M-m}{M+m}.
\end{equation}
It has been observed~\cite{Ostrowski64} that Hopf's strengthening is tight in the sense that there are examples of $A$ for which (\ref{Hopf-Strengthening}) holds with equality.

Though not the major concern of this work, let us mention that Frobenius~\cite{Frobenius0809, Frobenius12} generalized Perron's theorem to non-negative matrices, which is popularly known as the Perron-Frobenius theorem. This result is the key pillar of the theory of non-negative matrices, which has a wide range of applications in multiple disciplines; see, e.g.,~\cite{Seneta, Minc88, Berman94, Bapat97, nonlinear}. Accordingly, there are numerous results characterizing the isolation of the largest eigenvalue of non-negative matrices, most of them in the forms of upper bounds on the modulus of the second largest eigenvalue; see, e.g.,~\cite{Rothblum85} and the references therein. And it is worthwhile to note that for certain special families of symmetric non-negative matrices (such as adjacency matrices of a regular graph and transition probabilities matrices of a reversible stationary Markov chain), numerous Cheeger-type inequalities, which are in the forms of bounds on the difference between the largest and second largest eigenvalue, have been established; see, e.g.~\cite{Chung97, Brouwer12, Montenegro, Levin} and references therein.

Although it often shows up in the literature, the exact expression as in (\ref{Hopf-Strengthening}) actually does not appear in~\cite{Hopf1} and only follows from Theorem $4$ therein, stated for more general positive linear operators. As a matter of fact, a careful examination of the proof of Theorem $4$ reveals that it yields a bound stronger than (\ref{Hopf-Strengthening}).

To precisely state this stronger result, we need to introduce some notation and terminologies. Let $W$ denote the standard simplex in the $n$-dimensional Euclidean space:
\begin{equation} \label{real_simplex}
W = \left\{w=(w_1, w_2, ..., w_n) \in \mathbb{R}^n: \sum_{i=1}^n w_i =1, \;\; w_i \geq 0 \mbox{ for all } i \right\},
\end{equation}
and let $W^\circ$ denote its interior, consisting of all the positive vectors in $W$. Let $d_H$ denote the {\em Hilbert metric} on $W^{\circ}$, which is defined~\footnote{The Hilbert metric is often defined on a projective space (see, e.g.,~\cite{Seneta, nonlinear}), which is equivalent to the definition in this paper up to a usual normalization.} by
\begin{equation} \label{Hilbert-Metric}
d_H (v, w) \triangleq \max_{i,j} \log \left(\frac{w_i/w_j}{v_i/ v_j}\right), \mbox{ for any two vectors } v, w \in W^{\circ}.
\end{equation}
For any positive vector $w=(w_1, w_2, \dots, w_n) \in \mathbb{R}^n$, we define its normalized version $\mathcal{N}(w)$ as
\begin{equation} \label{N}
\mathcal{N}(w) = \frac{(w_1, w_2, \dots, w_n)}{w_1+w_2+\dots+w_n},
\end{equation}
which obviously belongs to $W^{\circ}$. Apparently, the matrix $A$ induces a mapping $f_A: W^{\circ} \to W^{\circ}$, defined by
\begin{equation} \label{fA}
f_A(w) = \mathcal{N}(A w), \mbox{ for any vector } w \in W^{\circ}.
\end{equation}
It is well known that $f_A$ is a contraction mapping under the Hilbert metric and the contraction coefficient $\tau(A)$, defined by
$$
\tau(A) \triangleq \sup_{v \neq w \in W^{\circ}} \frac{d_H (Av, Aw)}{d_H (v, w)}
$$
and often referred to as the \textit{Birkhoff contraction coefficient}, can be explicitly computed as
\begin{equation}
\tau(A) = \frac{1-\sqrt{\phi(A)}}{1+\sqrt{\phi(A)}},
\end{equation}
where
\begin{equation}
\phi(A) = \min_{i,j,k,l}\frac{a_{i k}a_{j l}}{a_{j k}a_{i l}}.
\end{equation}

We are now ready to state the aforementioned stronger result:
\begin{theorem} \label{main-theorem}
For an $n \times n$ positive matrix $A$, we have
\begin{equation} \label{main-formula}
\kappa(A) \leq \tau(A).
\end{equation}
\end{theorem}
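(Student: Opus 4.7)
The plan is to extend the Hilbert metric to a suitable class of complex vectors near $W^{\circ}$, verify that $f_A$ remains a $\tau(A)$-contraction in that extension, and then apply the contraction to a perturbation of the Perron eigenvector in the direction of a (possibly complex) eigenvector for the second-largest-in-modulus eigenvalue. Concretely, I would introduce a complex extension $d_H^{\mathbb{C}}$ on the open set of complex vectors whose entries lie in a narrow complex cone about the positive real axis, obtained by replacing the logarithm in (\ref{Hilbert-Metric}) with its principal complex branch and taking moduli, e.g.,
\begin{equation*}
d_H^{\mathbb{C}}(v,w) \triangleq \max_{i,j} \left| \log \frac{w_i v_j}{v_i w_j} \right|.
\end{equation*}
This reduces to $d_H$ on $W^{\circ}$ and is projectively invariant, i.e., unchanged when $v$ or $w$ is multiplied by a nonzero complex scalar; in particular the $\mathcal{N}$ normalization plays no role in computing it.

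The heart of the proof would be showing that $f_A$ is still a contraction with coefficient $\tau(A)$ in $d_H^{\mathbb{C}}$. I would try to mimic Birkhoff's derivation: in the real case, each coordinate ratio $(Aw)_i/(Av)_i$ is a positive-coefficient weighted mean of the ratios $w_k/v_k$, and this convex combination structure forces the cross-ratio to contract by exactly $\tau(A)$. For complex $v,w$ sufficiently close to positive vectors, the same weighted-mean representation still holds, and the extremal cross-ratio configuration that realizes $\tau(A)$ in the real setting should continue to dominate, provided the relevant logarithms remain on the principal branch along the segment between them. Making this precise reduces to an estimate on the image of a complex cross-ratio under a M\"obius-type transformation with positive coefficients.

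Granting the contraction, the conclusion is a short linearization. Fix a Perron eigenvector $u$ and a (complex) eigenvector $v$ with $Av = \lambda v$ and $|\lambda| = \kappa(A)\rho(A)$; since $\lambda \neq \rho(A)$, $v$ is not a scalar multiple of $u$. For small $|\epsilon|$ the vector $u+\epsilon v$ lies in the domain of $d_H^{\mathbb{C}}$, and $A(u+\epsilon v) = \rho(A)\,u + \epsilon\lambda v$. Projective invariance of $d_H^{\mathbb{C}}$ together with the contraction inequality gives
\begin{equation*}
d_H^{\mathbb{C}}\!\left(u + \tfrac{\lambda}{\rho(A)}\,\epsilon v,\; u\right)
= d_H^{\mathbb{C}}\!\left(f_A(u+\epsilon v), f_A(u)\right)
\leq \tau(A)\, d_H^{\mathbb{C}}(u+\epsilon v,\, u).
\end{equation*}
A first-order expansion of the defining formula yields $d_H^{\mathbb{C}}(u+\epsilon w, u) = |\epsilon|\, M(w) + O(|\epsilon|^2)$ with $M(w) = \max_{i,j}|w_i/u_i - w_j/u_j|$, and $M(v) > 0$ because $v$ is not parallel to $u$. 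Applying this expansion on both sides (with $w = (\lambda/\rho(A))v$ on the left and $w = v$ on the right), dividing by $|\epsilon|$, and letting $\epsilon \to 0$ produces $|\lambda|/\rho(A) \leq \tau(A)$, which is exactly (\ref{main-formula}).

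The hard step, unsurprisingly, is the second one: showing that $\tau(A)$ really is the contraction coefficient of $f_A$ in the complex extension. Transplanting Birkhoff's real-analytic optimization of cross-ratios to the complex domain requires controlling the principal logarithm along the relevant paths and checking that the extremal configuration pinning down $\tau(A)$ in the real case is not beaten by configurations that only become available once complex vectors are allowed. The definition of $d_H^{\mathbb{C}}$, its projective invariance, and the final eigenvector perturbation are essentially routine once this core contraction statement is established.
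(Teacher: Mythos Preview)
Your overall strategy coincides with the paper's: extend $d_H$ to complex vectors near $W^{\circ}$ via the principal logarithm, establish a contraction for $f_A$ there, and read off $|\lambda|/\rho(A)\le\tau(A)$ by perturbing the Perron eigenvector in an eigen-direction. The endgame you propose---a one-step application of the contraction followed by the first-order expansion $d_H^{\mathbb{C}}(u+\epsilon w,u)=|\epsilon|\max_{i,j}|w_i/u_i-w_j/u_j|+O(|\epsilon|^2)$ and $\epsilon\to 0$---is a tidy variant of what the paper does; the paper instead iterates $A^m$, bounds $d_H(\mathcal{N}(A^m u),\mathcal{N}(A^m(u+w)))$ above by $C\,\tau_\varepsilon(A)^m$ and below by $c\,|\lambda/\rho(A)|^m$, and lets $m\to\infty$ before $\varepsilon\to 0$. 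Your linearization avoids the iteration and the auxiliary equivalence between $d_E$ and $d_H$ (the paper's Lemma~\ref{dEdH}), which is a genuine simplification.

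The one place to be careful is the contraction step. You aim to show that the complex contraction coefficient equals $\tau(A)$ exactly, and you correctly flag the worry that complex configurations might beat the real extremal one. The paper does \emph{not} prove the exact statement: its Lemma~\ref{complex-contraction} only gives a coefficient $\tau_\varepsilon(A)=\tau(A)+O(\varepsilon)$ on the $\varepsilon$-thickening $W_{\mathbb{C}}^{\circ}(\varepsilon)$, obtained by a perturbative comparison with the real contraction rather than by reproducing Birkhoff's cross-ratio optimization over $\mathbb{C}$. The good news is that this weaker statement is already enough for your argument: since $u+\epsilon v$ lies in $W_{\mathbb{C}}^{\circ}(C|\epsilon|)$, your displayed inequality holds with $\tau(A)$ replaced by $\tau_{C|\epsilon|}(A)$, and after dividing by $|\epsilon|M(v)$ and sending $\epsilon\to 0$ the extra $O(|\epsilon|)$ vanishes. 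So rather than trying to transplant Birkhoff's extremal analysis to the complex setting---which may well fail on a fixed neighborhood---you should target the asymptotic contraction $\tau_\varepsilon(A)\to\tau(A)$, which is both easier and sufficient.
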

\noindent As mentioned before, Theorem~\ref{main-theorem} follows from Theorem $4$ in~\cite{Hopf1}, which is a contraction result with respect to the Hopf oscillation. Ostrowski~\cite{Ostrowski64} modified Birkhoff's argument in~\cite{Birkhoff} and gave an alternative proof of Theorem~\ref{main-theorem}, which however still used the Hopf oscillation. In this work, we will give a new proof of Theorem~\ref{main-theorem} using a complex extension of the Hilbert metric in lieu of the Hopf oscillation. As it turned out, the complex Hilbert metric can be applied elsewhere; more specifically, it has been used~\cite{HH} to establish the analyticity of entropy rate of hidden Markov chains and specify the corresponding domain of analyticity.

\section{A Complex Hilbert Metric} \label{CHM}

Let $W_{\mathbb{C}} = \{w=(w_1, w_2, \dots, w_n) \in \mathbb{C}^n : \sum_{i=1}^n w_i=1\}$ and let $W_{\mathbb{C}}^+ = \{w=(w_1, w_2, \dots, w_n) \in W_{\mathbb{C}}: \mathcal{R}(w_i/w_j) >0 \mbox{ for all } i,j \}$. The following complex extension of the Hilbert metric has been proposed in~\cite{HH}:
\begin{equation}  \label{ComplexHilbertMetric}
d_H(v, w)=\max_{i, j} \left| \log \left( \frac{w_i/w_j}{v_i/v_j} \right) \right|, \mbox{ for any }
v, w \in W_{\mathbb{C}}^+,
\end{equation}
where $\log(\cdot)$ is taken as the principal branch of the complex $\log(\cdot)$ function. Here we remark that there are other complex extensions of the Hilbert metric; see, e.g.,~\cite{Rugh, Dubois}. Our treatment however only uses the extension in (\ref{ComplexHilbertMetric}), which will henceforth be referred to as the complex Hilbert metric. For any $\varepsilon > 0$, we define
{\small \begin{equation} \label{W-Neighborhood}
\hspace{-0.2cm} W_{\mathbb{C}}^{\circ}(\varepsilon) \triangleq \{w=(w_1, w_2, \cdots, w_n) \in W_{\mathbb{C}}: \exists \, v \in W^{\circ} \mbox{ such that } |w_i-v_i| \leq \varepsilon v_i \mbox{ for all } i\}.
\end{equation}}
\!\!\!It can be easily verified that for $\varepsilon$ small enough, $W_{\mathbb{C}}^{\circ}(\varepsilon) \subset W_{\mathbb{C}}^+$ and thereby the complex Hilbert metric is well-defined on $W_{\mathbb{C}}^{\circ}(\varepsilon)$.

Extending the definition in (\ref{N}), for any complex vector $w=(w_1, w_2, \dots, w_n)$ with $w_1+w_2+\dots+w_n \neq 0$, we define its normalized version $\mathcal{N}(w)$ as
$$
\mathcal{N}(w) = \frac{(w_1, w_2, \dots, w_n)}{w_1+w_2+\dots+w_n},
$$
which obviously belongs to $W_{\mathbb{C}}$. And furthermore, for any $\varepsilon > 0$, extending the definition in (\ref{fA}), we define $f_A: W_{\mathbb{C}}^{\circ}(\varepsilon) \to W_{\mathbb{C}}^{\circ}(\varepsilon)$ by:
\begin{equation} \label{complex-fA}
f_A(w) = \mathcal{N}(A w), \mbox{ for any vector } w \in W_{\mathbb{C}}^{\circ}(\varepsilon),
\end{equation}
which is well-defined if $\varepsilon$ is small enough.

The following lemma has been implicitly established in~\cite{HH}. We outline its proof for completeness and clarity. An interested reader may refer to the proofs of Theorem $2.4$ in~\cite{HH} and relevant lemmas for more technical details.
\begin{lemma} \label{complex-contraction}
Consider an $n \times n$ positive square matrix $A$. For any small enough $\varepsilon > 0$, there exists $0 < \tau_{\varepsilon}(A) <1$ such that for any $x, y \in W_{\mathbb{C}}^{\circ}(\varepsilon)$,
\begin{equation} \label{still-contraction}
d_H(f_A(x), f_A(y)) \leq \tau_{\varepsilon}(A) d_H(x, y),
\end{equation}
and moreover, $\tau_{\varepsilon}(A)$ tends to $\tau(A)$ as $\varepsilon$ tends to $0$.
\end{lemma}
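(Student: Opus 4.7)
The plan is to reduce the contraction statement to a perturbative version of the classical real Birkhoff argument. First, since $f_A$ preserves projective ratios, one has $d_H(f_A(x), f_A(y)) = \max_{i,j} |\log R_{ij}(x,y)|$ with
$$R_{ij}(x,y) = \frac{(Ay)_i(Ax)_j}{(Ax)_i(Ay)_j}.$$
Setting $r_k = y_k/x_k$ and introducing the (complex) weights $p_k^{(i)} = a_{ik}x_k/(Ax)_i$, one rewrites $R_{ij} = \big(\sum_k p_k^{(i)}r_k\big)\big/\big(\sum_k p_k^{(j)}r_k\big)$, the same algebraic form that governs Birkhoff's classical argument. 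For $x,y \in W_{\mathbb{C}}^{\circ}(\varepsilon)$ with $\varepsilon$ small, each $r_k$ lies in a small complex neighborhood of the positive real axis, so $\log r_k$ is well-defined via the principal branch and is comparable in modulus to $d_H(x,y)$.

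Second, I would exploit that $x$ and $y$ are within $\varepsilon$-relative distance of real positive vectors $u, v \in W^{\circ}$ provided by the definition of $W_{\mathbb{C}}^{\circ}(\varepsilon)$. Writing $x_k = u_k(1+\xi_k)$ and $y_k = v_k(1+\eta_k)$ with $|\xi_k|, |\eta_k| \leq \varepsilon$, the complex weights $p_k^{(i)}$ differ from the honest probability weights $\tilde p_k^{(i)} = a_{ik}u_k/(Au)_i$ by $O(\varepsilon)$ uniformly in $i, k$, and analogously for $p_k^{(j)}$. Substituting these expansions into $R_{ij}$ and taking logarithms, I would split $\log R_{ij}$ into a real Birkhoff contribution $\log \widetilde R_{ij}(u,v)$, controlled by $\tau(A)\, d_H(u,v)$ via the classical contraction theorem, plus a complex correction of order $O(\varepsilon)$. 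Setting $\tau_{\varepsilon}(A) = \tau(A) + \delta(\varepsilon)$ with $\delta(\varepsilon) \to 0$ as $\varepsilon \to 0$ then yields the desired inequality, and $\tau_{\varepsilon}(A) < 1$ for $\varepsilon$ small since $\tau(A) < 1$.

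The main technical obstacle is making this bound genuinely multiplicative in $d_H(x,y)$ rather than merely additive: a naive expansion would only give $|\log R_{ij}| \leq \tau(A)\, d_H(x,y) + O(\varepsilon)$, which is useless when $d_H(x,y)$ is small. The fix is to observe that $R_{ij}(x,y) \equiv 1$ when $x = y$ regardless of the complex perturbations, so by a Taylor expansion the $O(\varepsilon)$ correction itself must carry a factor of $d_H(x,y)$; more concretely, one linearizes $\log R_{ij}$ in $y - x$ around the diagonal and estimates the resulting operator on tangent vectors, whose operator norm in a seminorm compatible with the complex Hilbert metric converges to $\tau(A)$ as $\varepsilon \to 0$. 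One must additionally keep $\varepsilon$ small in terms of $\phi(A)$ so that every $R_{ij}$ avoids the negative real axis, preserving analyticity of the principal branch of $\log$ throughout. Assembling these ingredients produces Lemma~\ref{complex-contraction} along the same technical lines as in~\cite{HH}.
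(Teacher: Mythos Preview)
Your proposal is correct and follows essentially the same route as the paper. Both reduce to the real Birkhoff contraction plus an $O(\varepsilon)$ perturbation; the paper implements your linearization idea via the mean value theorem applied along the interpolation path $\xi \mapsto e^{(c_m-c_q)\xi}\,y_m$ (with $c_m=\log(x_m/y_m)$), which yields a bound directly on the ratio $d_H(f_A(x),f_A(y))/d_H(x,y)$ and thereby sidesteps the additive-versus-multiplicative obstacle you flag.
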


\begin{proof}
First of all, we note, by the definition in (\ref{ComplexHilbertMetric}), that for any $x, y \in W_{\mathbb{C}}^{\circ}(\varepsilon)$,
$$
\frac{d_H(f_A(x), f_A(y))}{d_H(x,y)}=\frac{d_H(\mathcal{N}(A x), \mathcal{N}(A y))}{d_H(x,y)}=\max_{i,j}|L_{i,j}|,
$$
where
$$
L_{i, j} = \frac{\log \left(\sum_m a_{i m} x_m /\sum_m a_{j m}  x_m \right)-\log \left(\sum_m a_{i m} y_m /\sum_m a_{j m} y_m \right)}{\max_{k,l}|\log(x_k/y_k) - \log(x_l/y_l)|}.
$$
Letting $c_i=\log(x_i/y_i)$ for all $i$ and choosing $p, q$ such that
$|c_p -c_q| = \max_{k,l}|c_k - c_l|$, we note that $L_{i, j}$ can be rewritten as
$$
L_{i,j} = \frac{\log \left(\sum_m e^{c_m-c_q} a_{i m} y_m/\sum_m e^{c_m-c_q} a_{j m} y_m\right)-\log \left(\sum_m a_{i m} y_m/\sum_m a_{j m} y_m \right)}{|c_p -c_q|}.
$$
An application of the mean value theorem then yields that there exists $\xi \in [0, 1]$ such that
$$
|L_{i,j}| \leq \sum_l \frac{c_l - c_q}{|c_p-c_q|}\left(\frac{e^{(c_l-c_q)\xi} a_{i l} y_l}{\sum_m e^{(c_m-c_q)\xi} a_{i m} y_m}-\frac{e^{(c_l-c_q)\xi} a_{j l} y_l}{\sum_m e^{(c_m-c_q)\xi} a_{j m} y_m}\right).
$$
By the definition of $W_{\mathbb{C}}^{\circ}(\varepsilon)$, there exist $x^{\circ}, y^{\circ} \in W^\circ$ such that for some constant $C_1 > 0$,
$$
|x_k-x^{\circ}_k| \leq C_1 \varepsilon x^{\circ}_k, \quad |y_k-y^{\circ}_k| \leq C_1 \varepsilon y^{\circ}_k \mbox{ for all } k.
$$
Now, let
$$
D_l = \frac{e^{(c_l-c_q)\xi} a_{i l} y_l}{\sum_m e^{(c_m-c_q)\xi} a_{i m} y_m}-\frac{e^{(c_l-c_q)\xi} a_{j l} y_l}{\sum_m e^{(c_m-c_q)\xi} a_{j m} y_m},
$$
and
$$
D^{\circ}_l = \frac{e^{(c^{\circ}_l-c^{\circ}_q)\xi} a_{i l} y^{\circ}_l}{\sum_m e^{(c^{\circ}_m-c^{\circ}_q)\xi} a_{i m} y^{\circ}_m}-\frac{ e^{(c^{\circ}_l-c^{\circ}_q)\xi} a_{j l} y^{\circ}_l}{\sum_m e^{(c^{\circ}_m-c^{\circ}_q)\xi} a_{j m} y^{\circ}_m},
$$
where we have, similarly as above, defined $c^{\circ}_i = \log(x^{\circ}_i/y^{\circ}_i)$ for all $i$. It then follows from the established facts that for some constant $C_2 > 0$,
\begin{equation*}
\left|\sum_l \frac{c_l-c_q}{|c_p-c_q|} D_l -\sum_l \frac{c_l-c_q}{|c_p-c_q|} D^{\circ}_l \right|< C_2 C_1 \varepsilon,
\end{equation*}
and
$$
\left|\sum_l \frac{c_l-c_q}{|c_p-c_q|} D^{\circ}_l\right| \leq \tau (A)
$$
that
$$
\left|\sum_l \frac{c_l-c_q}{|c_p-c_q|} D_l\right| \leq C_2 C_1 \varepsilon + \tau (A),
$$
which immediately implies that
$$
\frac{d_H(f_A(x),f_A(y))}{d_H(x,y)} \leq  C_2 C_1 \varepsilon + \tau (A).
$$
Setting $\tau_{\varepsilon}(A) = C_2 C_1 \varepsilon + \tau (A)$ and noting that $\varepsilon$ can be chosen arbitrarily small, we establish (\ref{still-contraction}) and conclude that $\tau_{\varepsilon}(A)$ tends to $\tau(A)$ as $\varepsilon$ tends to $0$.
\end{proof}

\section{Proof of Theorem~\ref{main-theorem}}  \label{main-proof}

For a subset $S$ of $W^{\circ}$, we generalize the definition in (\ref{W-Neighborhood}) and define
$$
\hspace{-0.3cm} S_{\mathbb{C}}(\varepsilon) \triangleq \{w=(w_1, w_2, \cdots, w_n) \in W_{\mathbb{C}}: \exists \; v \in S \mbox{ such that } |w_i-v_i| \leq \varepsilon v_i \mbox{ for all } i \}.
$$
We will need the following lemma, which, roughly speaking, asserts the equivalence between the Euclidean metric (denoted by $d_E$) and the Hilbert metric on a complex neighborhood of a compact subset of $W^{\circ}$
\begin{lemma} \label{dEdH}
For any compact subset $S$ of $W^{\circ}$, there exists $\varepsilon_0 > 0$ such that there exist constants $G_1, G_2 > 0$ such that for all $0 < \varepsilon < \varepsilon_0$ and for all $v, w \in S_{\mathbb{C}}(\varepsilon)$,
$$
G_1 d_H(v, w) < d_E(v, w)< G_2 d_H(v, w).
$$
\end{lemma}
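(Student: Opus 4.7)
The plan is to reduce the equivalence of $d_H$ and $d_E$ on $S_{\mathbb{C}}(\varepsilon)$ to a bi-Lipschitz property of a single holomorphic chart on the affine space $W_{\mathbb{C}}$. Compactness of $S$ in $W^\circ$ supplies constants $0 < c_1 < c_2$ with $c_1 \leq v_i \leq c_2$ for every $v \in S$ and every $i$, and for $\varepsilon$ small enough the definition of $S_{\mathbb{C}}(\varepsilon)$ yields $(1-\varepsilon)c_1 \leq |w_i| \leq (1+\varepsilon)c_2$ and keeps every ratio $w_i/w_j$ inside a fixed compact subset of the open right half-plane; in particular the principal branch of $\log$ is analytic throughout $S_{\mathbb{C}}(\varepsilon)$.

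I would then introduce the chart $\Psi : S_{\mathbb{C}}(\varepsilon) \to \mathbb{C}^{n-1}$ with $\Psi_k(w) = \log(w_k/w_n)$ for $k = 1, \dots, n-1$, and set $\Psi_n \equiv 0$ so that $\log(w_i/w_j) = \Psi_i(w) - \Psi_j(w)$ for all $i, j$. Then $d_H(v, w) = \max_{i,j} |(\Psi_i(v) - \Psi_i(w)) - (\Psi_j(v) - \Psi_j(w))|$, and because $\Psi_n(v) - \Psi_n(w) = 0$, this oscillation is squeezed between $\|\Psi(v) - \Psi(w)\|_\infty$ and $2\|\Psi(v) - \Psi(w)\|_\infty$. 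Since $d_E(v, w) = \|v - w\|$ and all norms on $\mathbb{C}^{n-1}$ are comparable, the lemma reduces to showing that $\Psi$ is bi-Lipschitz on $S_{\mathbb{C}}(\varepsilon)$.

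The easy direction $\|\Psi(v) - \Psi(w)\| \leq C \|v - w\|$, which supplies the constant $G_1$, is routine: each coordinate $\Psi_k(v) - \Psi_k(w)$ depends Lipschitz-continuously on $(v, w)$ because the arguments $v_k/v_n$ and $w_k/w_n$ lie in a fixed compact subset of $\mathbb{C} \setminus (-\infty, 0]$ on which $\log$ has bounded derivative, and the moduli $|v_n|, |w_n|$ are uniformly bounded below. The hard direction $\|v - w\| \leq C' \|\Psi(v) - \Psi(w)\|$ is the main obstacle, and I would obtain it by a compactness-contradiction argument after checking two injectivity facts. First, $\Psi$ is injective on $S_{\mathbb{C}}(\varepsilon)$: $\Psi(v) = \Psi(w)$ forces $v_k/v_n = w_k/w_n$ for every $k$, so $v$ and $w$ are proportional, and the common normalization $\sum v_i = \sum w_i = 1$ then yields $v = w$. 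Second, at every $v \in S_{\mathbb{C}}(\varepsilon)$ the differential $D\Psi(v): h \mapsto (h_k/v_k - h_n/v_n)_{k<n}$ is injective on the tangent space $\{h \in \mathbb{C}^n : \sum h_i = 0\}$, because the vanishing of every $h_k/v_k - h_n/v_n$ forces $h$ proportional to $v$, after which $\sum h_i = 0$ combined with $\sum v_i = 1$ gives $h = 0$. If the desired lower bound failed, compactness of $S_{\mathbb{C}}(\varepsilon)$ would produce sequences $v_m, w_m$ with $\|\Psi(v_m) - \Psi(w_m)\|/\|v_m - w_m\| \to 0$; extracting subsequential limits $v^*, w^* \in S_{\mathbb{C}}(\varepsilon)$, one obtains a contradiction either with injectivity of $\Psi$ (if $v^* \neq w^*$) or with injectivity of $D\Psi(v^*)$ (if $v^* = w^*$, after normalizing $v_m - w_m$ to extract a unit limiting tangent direction). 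This produces $G_2$ and completes the proof.
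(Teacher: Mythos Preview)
Your proof is correct and follows the same approach the paper indicates: the paper's own proof merely invokes the mean value theorem and compactness of $S$, deferring all details to Proposition~2.1 of~\cite{H}, and your chart $\Psi(w)=(\log(w_k/w_n))_{k<n}$ together with the compactness--contradiction argument for the inverse Lipschitz bound is a clean realization of exactly those two ingredients. The only cosmetic remark is that, to obtain $G_1,G_2$ independent of $\varepsilon$, it suffices to run your argument once on the compact set $S_{\mathbb{C}}(\varepsilon_0)$, since $S_{\mathbb{C}}(\varepsilon)\subset S_{\mathbb{C}}(\varepsilon_0)$ for every $\varepsilon<\varepsilon_0$.
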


\begin{proof}
The lemma follows from some straightforward arguments underpinned by the mean value theorem and the compactness of $S$, which are completely parallel to those in the proof of Proposition $2.1$ in~\cite{H} (a real version of this lemma).
\end{proof}

We are now ready for the proof of Theorem~\ref{main-theorem}.
\begin{proof}
Consider an $n \times n$ positive square matrix $A$. Let $x=(x_1, x_2, \dots, x_n)$ be the eigenvector corresponding to $\rho(A)$. By the Perron-Frobenius theorem, we can choose $x$ to be a positive vector with $x_1+x_2+\dots+x_n=1$, i.e., $x \in W^{\circ}$. Let $\lambda$ be an eigenvalue of $A$ that is different from $\rho(A)$ and let $y$ be a corresponding eigenvector. Here we remark that while $\rho(A)$ and $x$ are real, $\lambda$ and $y$ can be complex.

Now, consider a compact subset $S$ of $W^{\circ}$ that contains $x$. It can be easily verified that for any $\varepsilon > 0$, there exists $n_0 \in \mathbb{N}$ such that for any $n \geq n_0$,
$$
\mathcal{N}(A^n(x+y))=\mathcal{N}(\rho^n(A) x + \lambda^n y) \in S_{\mathbb{C}}(\varepsilon).
$$
Henceforth, we let $v=\rho(A)^{n_0} x$ and $w = \lambda^{n_0} y$. For any $m \in \mathbb{N}$, it can be verified that
\begin{align*}
d_H(\mathcal{N}(A^m v), \mathcal{N}(A^m(v+w))) & = d_H(\mathcal{N}(\rho(A)^m v), \mathcal{N}(\rho(A)^m v + \lambda^m w))\\
                                               & = d_H(\mathcal{N}(v), \mathcal{N}(v + \tilde{\lambda}^m w)),
\end{align*}
where we have written $\lambda/\rho(A)$ as $\tilde{\lambda}$ for notational simplicity. Now, using the definition of the complex Hilbert metric, we continue
\begin{align}
d_H(\mathcal{N}(A^m v), \mathcal{N}(A^m(v+w))) &=\max_{i,j = 1,2,...,n} \left|\log \frac{({v_{i}+\tilde{\lambda}^m w_{i}})/({v_{j}+\tilde{\lambda}^m w_{j}})}{{v_{i}}/{v_{j}}}\right| \nonumber\\
&= \max_{i,j = 1,2,...,n} \left|\log \frac{{1+\tilde{\lambda}^m (w_{i}}/{v_{i}})}{{1+\tilde{\lambda}^m (w_{j}}/{v_{j}})}\right| \nonumber\\
&= \max_{i,j = 1,2,...,n} \left|\log \left(1+{\frac{\tilde{\lambda}^m (w_{i}/v_{i})-(w_{j}/v_{j})}{1+ \tilde{\lambda}^m (w_{j}/v_{j})}}\right)\right| \nonumber\\
&= \max_{i,j = 1,2,...,n} \left|\log\left(1+{\frac{(w_{i}/v_{i})-(w_{j}/v_{j})}{(1/\tilde{\lambda}^m)+(w_{j}/v_{j})}}\right)\right| \nonumber\\
&= \left|\log\left(1+{\frac{(w_{i_0}/v_{i_0})-(w_{j_0}/v_{j_0})}{(1/\tilde{\lambda}^m)+(w_{j_0}/v_{j_0})}}\right)\right|, \label{many-equalities}
\end{align}
where we have assumed $i_0, j_0$ achieve the maxima in (\ref{many-equalities}). We note that $w_{i_0}/v_{i_0} \neq w_{j_0}/v_{j_0}$, since otherwise it would mean $d_H(\mathcal{N}(A^m v), \mathcal{N}(A^m(v+w)))=0$ and therefore $w$ would be a scaled version of $v$, contradicting the fact that $\lambda$ is different from $\rho(A)$.

It follows from the fact that $0 < \tilde{\lambda} < 1$ that there exists a constant $C_1 > 0$ such that for all $m$,
$$
d_H(\mathcal{N}(A^m v), \mathcal{N}(A^m(v+w))) = \left|\log\left(1+{\frac{(w_{i_0}/v_{i_0})-(w_{j_0}/v_{j_0})}{(1/\tilde{\lambda}^m)+(w_{j_0}/v_{j_0})}}\right)\right| \geq C_1 \left|{\frac{(w_{i_0}/v_{i_0})-(w_{j_0}/v_{j_0})}{(1/\tilde{\lambda}^m)+(w_{j_0}/v_{j_0})}}\right|.
$$
And by Lemmas~\ref{complex-contraction} and~\ref{dEdH}, there exist $0 < \tau_{\varepsilon}(A) < 1$ and a constant $C_2 > 0$ such that
$$
d_H (\mathcal{N}(A^m v), \mathcal{N}(A^m(v+w))) \leq C_2 \tau_{\varepsilon}^m(A) d_E(\mathcal{N}(v), \mathcal{N}(v+w)),
$$
which immediately implies that
$$
C_1 \left|{\frac{1}{(1/\tilde{\lambda}^m)+(w_{j_0}/v_{j_0})}}\right| \leq C_2 \tau_{\varepsilon}^m(A) \frac{d_E(\mathcal{N}(v), \mathcal{N}(v+w))}{|(w_{i_0}/v_{i_0})-(w_{j_0}/v_{j_0})|}.
$$
One then verifies that there exists a constant $C_3 > 0$ (which depends only on $x, y$) such that
$$
\frac{d_E(\mathcal{N}(v), \mathcal{N}(v+w))}{|(w_{i_0}/v_{i_0})-(w_{j_0}/v_{j_0})|} < C_3,
$$
and furthermore, there exists a constant $C_4 > 0$ such that for all $m$,
$$
\left|{\frac{1}{(1/\tilde{\lambda}^m)+(w_{j_0}/v_{j_0})}}\right| \geq C_4 \tilde{\lambda}^m.
$$
It then follows that after choosing $\varepsilon$ small enough and then $n_0$ large enough, we have
$$
C_1 C_4 \tilde{\lambda}^m \leq C_2 C_3 \tau_{\varepsilon}^m(A),
$$
which, upon letting $m$ tend to infinity, yields $\tilde{\lambda} \leq \tau_{\varepsilon}(A)$, where we have used the fact that all the constants $C_1, C_2, C_3, C_4$ can be chosen independent of $\varepsilon$. Moreover, using the fact that $\varepsilon$ can be chosen arbitrarily small, we apply Lemma~\ref{complex-contraction} to obtain $\tilde{\lambda} \leq \tau(A)$, which immediately leads to
$\kappa(A) \leq \tau(A)$, as desired.
\end{proof}

\bigskip
{\bf Acknowledgement.} This work is supported by the Research Grants Council of the Hong Kong Special Administrative Region, China, under Project 17301017 and by the National Natural Science Foundation of China, under Project 61871343.

\end{document}